\definecolor{halfgray}{gray}{0.55} 
\definecolor{webgreen}{rgb}{0,0.5,0}
\definecolor{webbrown}{rgb}{.6,0,0} 
\theoremstyle{thmstyleone}
\newtheorem{theorem}{Theorem}[section]
\newtheorem{proposition}[theorem]{Proposition}
\theoremstyle{thmstyletwo}
\newtheorem{example}[theorem]{Example}
\theoremstyle{thmstylethree}
\newtheorem{definition}[theorem]{Definition}
\begin{document}

\title[Spectral characterization of shadowing]{Spectral characterization of shadowing for linear operators on Hilbert spaces}

\author[M. Pituk]{Mih\'{a}ly Pituk}
\address{Department of Mathematics, University of Pannonia, Egyetem \'ut 10, 8200 Veszpr{\'e}m, Hungary;
HUN--REN--ELTE Numerical Analysis and Large Networks Research Group, Budapest, Hungary}
\email{pituk.mihaly@mik.uni-pannon.hu}

\subjclass[2020]{Primary: 37C50, 47A10, 37D05, 47A56}

\keywords{shadowing, linear operator, right spectrum, uniform expansivity}

\begin{abstract}
\vskip20pt
In this paper, we study one of the fundamental notions in dynamical systems, the shadowing of invertible (bounded and linear) operators on a Hilbert space. Although the problem of finding a spectral characterization for shadowing has been in the focus of the research for a long time, spectral criteria are available 
only for rather special classes of invertible operators. In this paper, we give a complete spectral characterization for the shadowing of an arbitrary invertible operator~\emph{T} on a complex Hilbert space. It is shown that~\emph{T} has the shadowing property if and only if its right spectrum is disjoint from the unit circle in the complex plane. As a consequence, the shadowing property for~\emph{T} is equivalent to the uniform expansivity of its adjoint operator.
\end{abstract}

\date{March 21, 2026}

\maketitle

\section{Introduction and the main result}\label{intro}
In this paper, we are interested in the shadowing of invertible (bounded and linear) operators on Hilbert spaces. \emph{Shadowing\,}, together with \emph{hyperbolicity} and \emph{uniform expansivity}, is a fundamental notion in the theory of dynamical systems (see, e.g., \cite{Diam}, \cite{Pal}, \cite{Pil}). 
It is well-known that an invertible operator on a complex Banach space is hyperbolic (uniformly expansive) if and only if its spectrum (approximate point spectrum) is disjoint from the unit circle in the complex plane. The aim of this paper is to give a similar spectral characterization for shadowing. In Theorem~\ref{thm:specshad}, 
we establish the following result: \emph{An invertible operator on a complex Hilbert space has the shadowing property if and if its right spectrum is disjoint from the unit circle in the complex plane.\,} First we introduce some notations and definitions.

Throughout the paper, 
$\mathbb Z$ and $\mathbb C$ denote the set of integers and the set of complex numbers, respectively. Let $\mathbb T=\{\,\lambda\in\mathbb C\mid|\lambda|=1\,\}$, the unit circle in~$\mathbb C$.
Given a complex Banach space~($X,\|\cdot\|)$, the symbol $L(X)$ denotes the space of bounded linear operators $T\colon X\rightarrow X$ equipped with the operator norm
\begin{equation*}
	\|T\|=\sup_{x\in S(X)}\|Tx\|,\qquad T\in L(X),
\end{equation*}
where $S(X)=\{\,x\in X: \|x\|=1\,\}$, the unit sphere in~$X$.

An operator $T\in L(X)$ is called \emph{invertible} if there exists $S\in L(X)$ such that $ST=TS=I$, where $I$ denotes the identity operator on~$X$. The set of all invertible operators in~$L(X)$ will be denoted by~$GL(X)$.

An operator $T\in L(X)$ is called \emph{right invertible} (\emph{left invertible}) if there exists $S\in L(X)$ such that $TS=I$ ($ST=I$).

The \emph{spectrum}, \emph{right spectrum}, \emph{left spectrum}, \emph{point spectrum}, \emph{approximate point spectrum} and the \emph{spectral radius} of $T\in L(X)$ are defined by
\begin{align*}
\sigma(T)&=\{\,\lambda\in\mathbb C\mid\text{$\lambda I-T$ is not invertible}\,\},\\
\sigma_r(T)&=\{\,\lambda\in\mathbb C\mid\text{$\lambda I-T$ is not right invertible}\,\},\\
\sigma_l(T)&=\{\,\lambda\in\mathbb C\mid\text{$\lambda I-T$ is not left invertible}\,\},\\
\sigma_p(T)&=\{\,\lambda\in\sigma(T)\mid\text{$\lambda I-T$ is not injective}\,\},\\
\sigma_a(T)&=\{\,\lambda\in\sigma(T)\mid\text{$\lambda I-T$ is not bounded below}\,\},\\
\end{align*}
and
$r(T)=\sup_{\lambda\in\sigma(T)}|\lambda|$, respectively.
An operator $T\in L(X)$ is \emph{bounded below} if 
there exists $\alpha>0$ such that $\|Tx\|\geq\alpha\|x\|$ for all $x\in X$.

In the following definitions of shadowing, hyperbolicity and uniform expansivity, $X$ is a complex Banach space.

\vskip10pt
\begin{definition}\label{def:shad}
{\rm 
Let $\delta>0$. By a \emph{$\delta$-pseudo\-trajectory} of an operator~$T\in GL(X)$, we mean a sequence $(x_n)_{n\in\mathbb Z}$ in~$X$ such that 
\begin{equation*}
\|x_{n+1}-Tx_n\|\leq\delta\qquad\text{for all $n\in\mathbb Z$.} 
\end{equation*}
We say that $T$ has the \emph{shadowing property} if for every $\epsilon>0$ there exists $\delta>0$ such that every $\delta$-pseudotrajectory of $(x_n)_{n\in\mathbb Z}$ of~$T$ is $\epsilon$-shadowed by a real trajectory of~$T$, i.e., there exists $x\in X$ such that
\begin{equation*}
\|x_n-T^n x\|\leq\epsilon\qquad\text{for all $n\in\mathbb Z$.} 
\end{equation*}
}
\end{definition} 

\vskip10pt
\begin{definition} \label{def:hyp}
{\rm
An operator $T\in GL(X)$ 
is called \emph{hyperbolic} if $X$ can be decomposed into the direct sum $X=M\oplus N$, where $M$ and~$N$ are closed  
subspaces of~$X$ with 
$T(M)=M$, $T(N)=N$ and such that the restrictions 
$T|_M\in L(M)$ and $T^{-1}|_N\in L(N)$ are uniform contractions, i.e.,
$r(T|_M)<1$ and $r(T^{-1}|_N)<1$.
}
\end{definition} 

\vskip10pt
\begin{definition} \label{def:unexp}\hskip-2.1pt
{\rm
An operator $T\in GL(X)$  is called \emph{uniformly expansive} if there exists a positive integer~$n$ such that, for every $x\in S(X)$, $\|T^n(x)\|\geq2$ or $\|T^{-n}(x)\|\geq2$.
}
\end{definition} 
\vskip10pt

In finite-dimensional spaces and for normal operators on Hilbert spaces the above three notions are equivalent (see~\cite{Ber}, \cite{Eis}, \cite{Omb}). Hyperbolicity always implies 
uniform expansivity (see \cite[Corollary~1]{Eis}) and shadowing (see \cite[Theorem~1]{Omb}), but, in general, the converse implications are false. Examples of nonhyperbolic uniformly expansive operators were given by Eisenberg and Hedlund~\cite
{Eis}, while Bernardes~{\it et al.}~\cite
{Ber} have constructed nonhyperbolic operators with the shadowing property. For further examples of nonhyperbolic operators with the shadowing property, see ~\cite{Ani}, \cite{Cir} and Example~\ref{ex:illust}.
We should also mention an important recent result, established independently by Bernardes and Messaoudi~\cite[Theorem~1]{BeMe} and by Cirillo~\emph{et al.}~\cite[Theorem~9]{Cir}, which shows that an invertible operator on a complex Banach space is hyperbolic if and only if it has the shadowing property and is (uniformly) expansive.
For further results and a list of related open problems, see
\cite{Ani}, \cite{BerCar} and~\cite{BePe}.

As noted before, hyperbolicity and uniform expansivity can be characterized in terms of the spectral values of operator~$T$ as follows.
\vskip10pt
\begin{theorem}\label{thm:spechyp}
{\rm \cite[Lemma~1]{Eis}}
Let $X$ be a complex Banach space. An operator $T\in GL(X)$ is hyperbolic if and only if
\begin{equation}\label{eq:spechyp}
\sigma(T)\cap\mathbb T=\emptyset,
\end{equation}
where $\sigma(T)$ is the spectrum of~$T$.	
\end{theorem}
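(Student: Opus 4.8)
The plan is to prove both implications by decomposing the spectrum into its parts inside and outside the unit circle and invoking the Riesz holomorphic functional calculus.

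For the direction from hyperbolicity to the spectral condition, I would start from a decomposition $X = M \oplus N$ as in Definition~\ref{def:hyp}. Since $M$ and $N$ are closed, $T$-invariant and complementary, $T$ is block-diagonal with respect to this decomposition, so $\sigma(T) = \sigma(T|_M) \cup \sigma(T|_N)$. Here $T|_M$ and $T|_N$ are invertible: the equalities $T(M) = M$ and $T(N) = N$ make the restrictions surjective, and injectivity of $T$ makes them injective, so the open mapping theorem gives boundedly invertible restrictions with $(T|_N)^{-1} = T^{-1}|_N$. The hypothesis $r(T|_M) < 1$ places $\sigma(T|_M)$ strictly inside the open unit disc. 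For the other block, $r(T^{-1}|_N) < 1$ puts $\sigma(T^{-1}|_N)$ inside the open disc, and the spectral mapping theorem for inversion, $\sigma(T|_N) = \{\,1/\mu : \mu \in \sigma(T^{-1}|_N)\,\}$, then places $\sigma(T|_N)$ strictly outside the closed unit disc. In either case the piece avoids $\mathbb{T}$, so $\sigma(T) \cap \mathbb{T} = \emptyset$.

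For the converse, assume $\sigma(T) \cap \mathbb{T} = \emptyset$. Since $\sigma(T)$ is compact and misses $\mathbb{T}$, it splits into two disjoint compact sets $\sigma_- = \sigma(T) \cap \{\,|\lambda| < 1\,\}$ and $\sigma_+ = \sigma(T) \cap \{\,|\lambda| > 1\,\}$. I would form the Riesz spectral projection
\[
P = \frac{1}{2\pi i}\oint_{\Gamma}(\lambda I - T)^{-1}\,d\lambda,
\]
where $\Gamma$ is a cycle separating $\sigma_-$ from $\sigma_+$ and winding once around $\sigma_-$. Standard properties of the functional calculus guarantee that $P$ is a bounded idempotent commuting with $T$, so $M = P(X)$ and $N = (I - P)(X)$ are closed, $T$-invariant, and satisfy $X = M \oplus N$; moreover the Riesz decomposition theorem gives $\sigma(T|_M) = \sigma_-$ and $\sigma(T|_N) = \sigma_+$. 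From $\sigma_- \subset \{\,|\lambda| < 1\,\}$ one gets $r(T|_M) < 1$, and since $0 \notin \sigma_+$ the restriction $T|_N$ is invertible with $\sigma((T|_N)^{-1}) = \{\,1/\mu : \mu \in \sigma_+\,\} \subset \{\,|\lambda| < 1\,\}$, hence $r(T^{-1}|_N) < 1$. This is precisely hyperbolicity.

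The only substantive input is the spectral decomposition property $\sigma(T|_M) = \sigma_-$ and $\sigma(T|_N) = \sigma_+$, which is exactly the content of the Riesz decomposition theorem; the remaining work is routine bookkeeping with the spectral mapping theorem. The point I expect to require the most care is ensuring that $T(M) = M$ and $T(N) = N$ hold as genuine equalities rather than mere inclusions, so that the restrictions are invertible on their respective subspaces; this follows because $T$ is invertible and commutes with $P$, so $T^{-1}$ preserves both $M$ and $N$ as well.
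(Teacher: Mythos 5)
Your proof is correct, but note that the paper itself contains no proof of this statement: Theorem~\ref{thm:spechyp} is quoted as a known result from Eisenberg and Hedlund \cite[Lemma~1]{Eis}. Your argument --- the direct-sum spectral splitting $\sigma(T)=\sigma(T|_M)\cup\sigma(T|_N)$ for the forward direction and the Riesz spectral projection with the Riesz decomposition theorem for the converse, plus the observation that invertibility of $T$ and commutation with $P$ upgrade the invariances $T(M)\subseteq M$, $T(N)\subseteq N$ to equalities --- is precisely the standard proof of that classical result, so you have in effect reconstructed the cited source's argument rather than diverged from the paper.
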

\vskip10pt

\begin{theorem}\label{thm:specuniexp}
{\rm \cite[Theorem~1]{Hed}}
Let $X$ be a complex Banach space. An operator $T\in GL(X)$ is uniformly expansive if and only if
\begin{equation}\label{eq:specuniexp}
\sigma_a(T)\cap\mathbb T=\emptyset,
\end{equation}
where $\sigma_a(T)$ is the approximate point spectrum of~$T$.
\end{theorem}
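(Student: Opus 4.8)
The plan is to prove the equivalence by its two opposite implications, after recording one reformulation of the right-hand side of \eqref{eq:specuniexp}. Since $\sigma_a(T)=\{\lambda:\lambda I-T\text{ is not bounded below}\}$ is closed, $\mathbb T$ is compact, and the map $\lambda\mapsto\inf_{x\in S(X)}\|(\lambda I-T)x\|$ is continuous (indeed $1$-Lipschitz) and strictly positive exactly off $\sigma_a(T)$, condition \eqref{eq:specuniexp} is equivalent to the existence of a constant $c>0$ with $\|(\lambda I-T)x\|\ge c\|x\|$ for all $\lambda\in\mathbb T$ and all $x\in X$. I will use this uniform lower bound of the ``symbol'' $\lambda I-T$ throughout.

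For the implication ``not \eqref{eq:specuniexp} $\Rightarrow$ $T$ not uniformly expansive'' I would argue directly. Pick $\lambda\in\sigma_a(T)\cap\mathbb T$ and unit vectors $x_k$ with $\|(\lambda I-T)x_k\|\to0$. A one-line induction propagates this: from $Tx_k-\lambda x_k\to0$ one obtains $\|(T^m-\lambda^m I)x_k\|\to0$ for each fixed $m$, and, applying $T^{-1}$, also $\|(T^{-m}-\lambda^{-m}I)x_k\|\to0$. As $|\lambda|=1$, this forces $\|T^mx_k\|\to1$ and $\|T^{-m}x_k\|\to1$. Hence for every fixed positive integer $n$ there is a unit vector $x_k$ with $\|T^nx_k\|<2$ and $\|T^{-n}x_k\|<2$, so the alternative in Definition~\ref{def:unexp} fails for that $n$; since $n$ was arbitrary, $T$ is not uniformly expansive.

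The converse is the substantial half, and the route I would take passes through the bi-infinite difference operator. On $\ell^\infty(\mathbb Z,X)$ let $(\tau z)_n=z_{n+1}$, let $\mathcal T$ act coordinatewise by $(\mathcal Tz)_n=Tz_n$, and put $\mathcal D=\tau-\mathcal T$, so that $(\mathcal Dz)_n=z_{n+1}-Tz_n$. I would prove two statements. First, that $\mathcal D$ is bounded below on $\ell^\infty(\mathbb Z,X)$ if and only if $T$ is uniformly expansive: in the direction needed here, a failure of uniform expansivity supplies, for every $n$, a unit vector whose forward and backward $n$-th iterates have norm $<2$, and I would turn a suitable renormalized orbit segment into an $\ell^\infty$ vector $z$ by a slowly varying (trapezoidal) cutoff, so that the only defects $z_{n+1}-Tz_n$ come from the cutoff and satisfy $\|\mathcal Dz\|_\infty=o(\|z\|_\infty)$, contradicting a lower bound for $\mathcal D$. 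Second — the analytic heart — that the uniform lower bound $c$ forces $\mathcal D$ to be bounded below: here $\mathcal D$ is exactly the Laurent (convolution) operator with operator-valued symbol $\lambda\mapsto\lambda I-T$, and I would deduce $\|\mathcal Dz\|_\infty\ge c'\|z\|_\infty$ from the pointwise estimate $\|(\lambda I-T)x\|\ge c\|x\|$. Chaining the two statements, \eqref{eq:specuniexp} (that is, the constant $c$) yields that $\mathcal D$ is bounded below, whence $T$ is uniformly expansive.

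I expect the symbol estimate in the second statement to be the main obstacle. When $X$ is a Hilbert space it is immediate: the Fourier transform identifies $\ell^2(\mathbb Z,X)$ with $L^2(\mathbb T,X)$ and $\mathcal D$ with multiplication by $\lambda I-T$, which is bounded below precisely when the symbol is uniformly bounded below, by Plancherel. For a general Banach space $X$ there is no such isometry, and the passage from a pointwise lower bound of the symbol to a lower bound of the Laurent operator on $\ell^\infty$ must be carried out by hand (or invoked from operator-valued Laurent operator theory); the natural mechanism is to localize a hypothetical almost-null vector of $\mathcal D$ to long finite windows and resonate it against the frequencies $\lambda\in\mathbb T$ through the telescoping identity for $\sum_j\lambda^{-j}T^j$, converting smallness of $\mathcal Dz$ into a violation of $\|(\lambda I-T)\cdot\|\ge c\|\cdot\|$. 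A secondary technical point is making the cutoff construction in the first statement uniform with respect to the size of the orbit segment. Once both statements are in place, the reformulation closes the equivalence.
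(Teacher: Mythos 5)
First, a point of order: the paper contains no proof of this statement -- it is quoted verbatim from Hedlund \cite{Hed} -- so your argument has to stand entirely on its own. Your compactness reformulation (the map $\lambda\mapsto\inf_{x\in S(X)}\|(\lambda I-T)x\|$ is $1$-Lipschitz and positive off $\sigma_a(T)$, hence has a positive minimum $c$ on $\mathbb T$ when \eqref{eq:specuniexp} holds) and your first implication are correct and complete: the factorization $T^m-\lambda^mI=\bigl(\sum_{j=0}^{m-1}\lambda^jT^{m-1-j}\bigr)(T-\lambda I)$ and its analogue for negative powers do give $\|T^{\pm n}x_k\|\to1$ for each fixed $n$, negating Definition~\ref{def:unexp}. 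The genuine gap is in the converse, and it is not the ``secondary technical point'' but your step (B). In fact (B) is not a stepping stone towards the theorem; it \emph{is} the theorem's hard direction in disguise, because bounded-belowness of $\mathcal D=\tau-\mathcal T$ on $\ell^\infty(\mathbb Z,X)$ is equivalent to uniform expansivity by elementary arguments with no spectral content: one direction is your cutoff lemma, and for the other, uniform expansivity self-improves (if $\|T^nx\|\geq2\|x\|$, applying the definition to $T^nx$ shows the backward alternative would give $\|x\|\geq4\|x\|$, so $\|T^{jn}x\|\geq2^j\|x\|$ for all $j$, or symmetrically backward, for every $x\neq0$), whence any $z\in\ell^\infty$ with $\|z_{k_0}\|\geq\tfrac12$ and defect $\delta$ satisfies $1\geq\|z_{k_0\pm2n}\|\geq2-2n\max(1,\|T\|,\|T^{-1}\|)^{2n}\delta$, forcing $\delta\geq\alpha>0$. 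So proving (B) for a Banach space is exactly proving ``\eqref{eq:specuniexp} $\Rightarrow$ uniform expansivity'', and you have deferred precisely that. Moreover the mechanism you sketch cannot close it: after telescoping, smallness of $\mathcal Dz$ gives $\|(\lambda I-T)u_\lambda\|\leq2\|z\|_\infty+N\delta$ for the windowed sums $u_\lambda=\sum_{j=0}^{N-1}\lambda^{-j}z_{k_0+j}$, and to contradict $\|(\lambda I-T)u_\lambda\|\geq c\|u_\lambda\|$ you need \emph{some} frequency with $\|u_\lambda\|\gg\|z\|_\infty$; that is a Parseval-type lower bound $\sup_{\lambda\in\mathbb T}\|\sum_j\lambda^{-j}v_j\|\gtrsim(\sum_j\|v_j\|^2)^{1/2}$, which is exactly what Hilbert space provides and what a general Banach space does not. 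This obstruction is also why the present paper's own machinery (one-sided holomorphic inverses, Proposition~\ref{prop:holright}) is confined to Hilbert spaces, where bounded below implies left invertible; Hedlund's Banach-space proof runs on entirely different lines.

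Two further remarks. In your first lemma, the claim that a trapezoidal cutoff yields $\|\mathcal Dz\|_\infty=o(\|z\|_\infty)$ is not justified as written: the defect is $|\phi(j+1)-\phi(j)|\,\|T^{j+1}x\|$, and the failure of uniform expansivity controls only $\|x\|$ and $\|T^{\pm n}x\|$, not the intermediate norms, which may have a peak of size $M\gg1$ inside the window. The construction can be saved, either by the two-sided estimate combining the Lipschitz property of $j\mapsto\log\|T^jx\|$ with the endpoint bounds (a peak of height $M$ at distance $d$ from an endpoint forces $M\leq2\|T^{-1}\|^{d}$, and balancing gives ratio $O(1/\log n)$), or more cleanly by a multiplicative cutoff $z_j=e^{-\mu|j|}T^jx$ (truncated at $|j|=n$), for which $\|z_{j+1}-Tz_j\|\leq(e^\mu-1)\|z\|_\infty$ up to truncation terms $\leq2\max(1,\|T\|)e^{-\mu n}$, while $\|z\|_\infty\geq\|z_0\|=1$; taking $\mu=n^{-1/2}$ settles it. With that repair and with Plancherel supplying (B), your scheme does give a complete and rather attractive proof \emph{for Hilbert spaces}, closely parallel in spirit to the paper's $\ell_1$/$\ell_\infty$ analysis of shadowing. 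But the statement under review is for Banach spaces, and for those the central step of your proposal remains open, not merely technical.
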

\vskip10pt
It is well-known (see
Proposition~\ref{prop:rightsprop}) that if~$X$ is a complex Hilbert space and $T\in L(X)$, then $\sigma_a(T)$ coincides with the left spectrum of~$T$. Consequently, in Hilbert spaces, Theorem~\ref{thm:specuniexp} can be reformulated as follows.
\vskip10pt
\begin{theorem}\label{thm:refspecuniexp}
Let $X$ be a complex Hilbert space. An operator $T\in GL(X)$ is uniformly expansive if and only if
\begin{equation}\label{eq:refspecuniexp}
\sigma_l(T)\cap\mathbb T=\emptyset,
\end{equation}
where $\sigma_l(T)$ is the left spectrum of~$T$.
\end{theorem}
\vskip10pt

To the best of our knowledge, a similar spectral characterization for shadowing is not available in the literature. 

Our aim in this paper is to prove the following theorem which provides a complete spectral characterization of the shadowing property for an invertible operator on a complex Hilbert space. 

\vskip10pt
\begin{theorem}\label{thm:specshad}
Let $X$ be a complex Hilbert space. An operator $T\in GL(X)$ has the shadowing property if and only if
\begin{equation}\label{eq:specshad}
\sigma_r(T)\cap\mathbb T=\emptyset,
\end{equation}
where $\sigma_r(T)$ is the right spectrum of~$T$.
\end{theorem}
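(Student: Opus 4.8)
The plan is to recast the shadowing property as a surjectivity statement for an associated difference operator on bounded sequences, and then to analyze that operator through its operator-valued symbol. Write $\ell^\infty(\mathbb Z,X)$ for the Banach space of bounded two-sided sequences in $X$ and define $D_T\colon\ell^\infty(\mathbb Z,X)\to\ell^\infty(\mathbb Z,X)$ by $(D_Tu)_n=u_{n+1}-Tu_n$. The first step is the equivalence
\begin{equation*}
\text{$T$ has the shadowing property}\iff\text{$D_T$ is surjective.}
\end{equation*}
For this, given $d\in\ell^\infty(\mathbb Z,X)$ one builds a pseudotrajectory with increments proportional to $d$, shadows it, and reads off a bounded solution $z$ of $D_Tz=d$; conversely, if $z$ solves $D_Tz=d$ with $d_n=x_{n+1}-Tx_n$, then $x_n-z_n$ is a genuine trajectory, so the point $x=x_0-z_0$ shadows $(x_n)$. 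The quantitative $\epsilon$--$\delta$ formulation matches surjectivity via the open mapping theorem. In parallel, I record the Hilbert-space duality that $\lambda I-T$ is right invertible if and only if $\bar\lambda I-T^*$ is bounded below, which yields $\sigma_r(T)=\overline{\sigma_a(T^*)}$; since $\mathbb T$ is invariant under conjugation, $\sigma_r(T)\cap\mathbb T=\emptyset$ is equivalent to $\sigma_a(T^*)\cap\mathbb T=\emptyset$, hence by Theorem~\ref{thm:specuniexp} to uniform expansivity of $T^*$.

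It then remains to prove that $D_T$ is surjective on $\ell^\infty(\mathbb Z,X)$ if and only if $\sigma_r(T)\cap\mathbb T=\emptyset$. Passing to Fourier variables turns $D_T$ into multiplication by the operator-valued symbol $\Phi(\theta)=e^{i\theta}I-T$, $\theta\in[0,2\pi)$. For the implication ``$\Leftarrow$'', assume $\sigma_r(T)\cap\mathbb T=\emptyset$. Then each $\Phi(\theta)$ is surjective and, by the duality above, each $\Phi(\theta)^*=e^{-i\theta}I-T^*$ is bounded below, uniformly in $\theta$ by compactness of $\mathbb T$; hence $\Phi(\theta)\Phi(\theta)^*$ is positive and boundedly invertible, uniformly in $\theta$, and
\begin{equation*}
R(\theta)=\Phi(\theta)^*\bigl[\Phi(\theta)\Phi(\theta)^*\bigr]^{-1}
\end{equation*}
is a right inverse of $\Phi(\theta)$ depending real-analytically on $\theta$. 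Its operator Fourier coefficients $\mathcal G_j=\tfrac1{2\pi}\int_0^{2\pi}R(\theta)e^{-ij\theta}\,d\theta$ then decay faster than any power of $|j|$, so $\sum_{j}\|\mathcal G_j\|<\infty$, and convolution by $(\mathcal G_j)$ defines a bounded operator $G$ on $\ell^\infty(\mathbb Z,X)$ whose kernel identity encodes $\Phi(\theta)R(\theta)=I$ and hence satisfies $D_TG=I$. Thus $D_T$ is surjective.

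For the implication ``$\Rightarrow$'', I argue by contraposition through the pre-adjoint. Using the pairing between $\ell^\infty(\mathbb Z,X)$ and $\ell^1(\mathbb Z,X)$ together with the Riesz identification $X^*\cong X$, one checks that $D_T=B^*$, where $B\colon\ell^1(\mathbb Z,X)\to\ell^1(\mathbb Z,X)$ is given by $(Bv)_n=v_{n-1}-T^*v_n$; hence surjectivity of $D_T$ forces $B$ to be bounded below. If, however, $e^{i\theta_0}\in\sigma_r(T)$, then $e^{-i\theta_0}\in\sigma_a(T^*)$, so there are unit vectors $y_k$ with $\|T^*y_k-e^{-i\theta_0}y_k\|\to0$; inserting the modulated, truncated blocks $v^{(k,N)}_n=e^{in\theta_0}y_k\,\mathbf 1_{\{|n|\le N\}}$ and letting $N\to\infty$ after $k\to\infty$ forces $\|Bv\|_1/\|v\|_1\to0$, so $B$ is not bounded below and $D_T$ is not surjective. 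I expect the main obstacle to be the ``$\Leftarrow$'' construction: because $\Phi(\theta)$ is in general only surjective and not invertible on $\mathbb T$, no two-sided convolution Green's function exists, and the argument must instead produce a smooth \emph{right}-inverse symbol $R(\theta)$ and control the summability of its Fourier coefficients in order to obtain a bounded right inverse of $D_T$ on the non-reflexive space $\ell^\infty(\mathbb Z,X)$.
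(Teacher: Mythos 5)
Your proposal is correct, and it shares the paper's overall architecture: both reduce shadowing to surjectivity of the difference operator $\mathcal S=D_T$ on $l_\infty(X)$ (the equivalence (i)$\Leftrightarrow$(ii) of Proposition~\ref{prop:shadcrit1}, due to Cirilo {\it et al.}~\cite{Cir}), both handle necessity through the duality $\mathcal B'=\mathcal S$ on $l_1(X)$ combined with the bounded-below/surjectivity duality (Proposition~\ref{prop:surj}), and both rest on the identification $\sigma_r(T)=(\sigma_a(T^*))^*$ (Proposition~\ref{prop:rightsprop}). The genuine divergence is in where the right inverse of the symbol $\Phi(\theta)=e^{i\theta}I-T$ comes from in the sufficiency half. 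The paper invokes the Allan--Ivanov theorem (Proposition~\ref{prop:holright}, \cite{All}, \cite{Iva}) to obtain a right inverse holomorphic on an annulus around $\mathbb T$, reads off geometric decay of its Laurent coefficients, and feeds $B=C_{-1}$ into Proposition~\ref{prop:shadcrit2}; you instead build the right inverse explicitly by the pseudo-inverse formula $R(\theta)=\Phi(\theta)^*[\Phi(\theta)\Phi(\theta)^*]^{-1}$, which is legitimate because surjectivity of $\Phi(\theta)$ makes the positive operator $\Phi(\theta)\Phi(\theta)^*$ boundedly invertible, and you get summability of the Fourier coefficients from real-analyticity. That step is sound and can even be sharpened: $P(\zeta)=I-e^{i\zeta}T^*-e^{-i\zeta}T+TT^*$ is entire, $2\pi$-periodic and invertible on the real axis, hence invertible on a strip, so the coefficients decay exponentially; equivalently, writing $\lambda=e^{i\theta}$, your $R$ becomes $(\lambda^{-1}I-T^*)\bigl(I-\lambda T^*-\lambda^{-1}T+TT^*\bigr)^{-1}$, a function holomorphic in $\lambda$ on an annulus, so your formula is in effect a constructive, Hilbert-space proof of precisely the instance of Allan--Ivanov that the paper needs. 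What each approach buys: yours is self-contained (no appeal to \cite{All}, \cite{Iva}) but intrinsically Hilbertian, whereas the paper's sufficiency argument works verbatim on any complex Banach space, the Hilbert structure entering only through necessity. On the necessity side the difference is minor: the paper rotates to $\lambda=1$ (Proposition~\ref{prop:unitshad}) and tests $\mathcal B$ against exponentially decaying sequences, letting $q\to1$, while you test against truncated modulated Weyl sequences $v_n=e^{in\theta_0}y_k$ for $|n|\le N$; your bulk/boundary estimate ($\|\mathcal Bv\|_1\le 2N\varepsilon_k+\|T^*\|+1$ against $\|v\|_1=2N+1$) does close, provided the limits are taken in the order you state ($k\to\infty$ for fixed $N$, then $N\to\infty$).
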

\vskip10pt

The proof of Theorem~\ref{thm:specshad} will be given in Sec.~\ref{sec:proof}.

Since $\sigma(T)=\sigma_l(T)\cup\sigma_r(T)$, Theorems~\ref{thm:refspecuniexp} and~\ref{thm:specshad} provide, in the Hilbert space setting, an alternative proof of the characterization of hyperbolicity in terms of the shadowing property and uniform expansivity.
\vskip0pt
Let~$X$ be a complex Hilbert space and $T\in L(X)$. Recall that the \emph{adjoint operator of~$T$} is the unique operator ~$T^*\in L(X)$ satisfying 
\begin{equation*}
	\langle Tx,y\rangle=\langle x,T^* y\rangle\qquad\text{for all $x$, $y\in X$},
\end{equation*}
where $\langle\cdot,\cdot\rangle$ denotes the inner product in~$X$. It is known (see~\cite{Con} and Proposition~\ref{prop:rightsprop}) that 
\begin{equation*}
\sigma(T^*)=(\sigma(T))^*
\qquad
\text{and}
\qquad
\sigma_a(T^*)=(\sigma_r(T))^*,	
\end{equation*}
where, for any subset~$\Lambda\subset\mathbb C$, the symbol $\Lambda^*:=\{\,\overline{\lambda}:\lambda\in\Lambda\,\}$ denotes the set of complex conjugates of the elements of~$\Lambda$.
From this and
Theorem~\ref{thm:specuniexp}, we obtain the following equivalent form of Theorem~\ref{thm:specshad}.

\vskip10pt
\begin{theorem}\label{thm:refspecshad}
Let $X$ be a complex Hilbert space. An operator $T\in GL(X)$ has the shadowing property if and only if its adjoint operator~$T^*$ is uniformly expansive, i.e.,
\begin{equation}\label{eq:refspecshad}
\sigma_a(T^*)\cap\mathbb T=\emptyset.
\end{equation}
\end{theorem}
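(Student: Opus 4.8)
The plan is to obtain Theorem~\ref{thm:refspecshad} as a direct consequence of Theorem~\ref{thm:specshad}, combined with the spectral identity relating the right spectrum of $T$ to the approximate point spectrum of its adjoint, and with the spectral characterization of uniform expansivity recorded in Theorem~\ref{thm:specuniexp}. Since the paper has already announced that Proposition~\ref{prop:rightsprop} supplies the key relation $\sigma_r(T)=\overline{\sigma_a(T^*)}$, where the bar denotes the set of complex conjugates, the entire argument reduces to chaining together three equivalences and one elementary observation about the unit circle. In other words, I would treat this statement as a corollary rather than as a theorem requiring fresh analytic input.

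First I would invoke Theorem~\ref{thm:specshad}: the operator $T$ has the shadowing property if and only if $\sigma_r(T)\cap\mathbb T=\emptyset$. Next I would substitute the identity $\sigma_r(T)=\overline{\sigma_a(T^*)}$ from Proposition~\ref{prop:rightsprop}, rewriting this condition as $\overline{\sigma_a(T^*)}\cap\mathbb T=\emptyset$. The only point needing a remark is that the unit circle is invariant under complex conjugation: a complex number $\lambda$ satisfies $|\lambda|=1$ precisely when $|\bar\lambda|=1$, so $\lambda\in\mathbb T \iff \bar\lambda\in\mathbb T$. Consequently $\overline{\sigma_a(T^*)}\cap\mathbb T=\emptyset$ holds if and only if $\sigma_a(T^*)\cap\mathbb T=\emptyset$.

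Finally, I would apply Theorem~\ref{thm:specuniexp} to the adjoint operator $T^*$, which translates the condition $\sigma_a(T^*)\cap\mathbb T=\emptyset$ into the uniform expansivity of $T^*$. Assembling these equivalences gives that $T$ has the shadowing property if and only if $T^*$ is uniformly expansive, which is exactly the assertion of Theorem~\ref{thm:refspecshad}, its spectral form being the condition \eqref{eq:refspecshad}.

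I do not expect any genuine obstacle in this derivation: all the substantive content has been front-loaded into Theorem~\ref{thm:specshad} (the spectral characterization of shadowing) and into the operator-theoretic Proposition~\ref{prop:rightsprop} (the conjugation identity between $\sigma_r(T)$ and $\sigma_a(T^*)$). The only risk is one of bookkeeping---keeping track of the complex conjugation correctly and not conflating $\sigma_r$ with $\sigma_l$ or $\sigma_a$---so the care in the write-up will go into stating the conjugation symmetry of $\mathbb T$ cleanly rather than into any estimate.
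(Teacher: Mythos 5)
Your proposal is correct and follows exactly the paper's own route: the paper derives Theorem~\ref{thm:refspecshad} from Theorem~\ref{thm:specshad} by substituting the identity $\sigma_r(T)=\{\,\overline{\lambda}\mid\lambda\in\sigma_a(T^*)\,\}$ from Proposition~\ref{prop:rightsprop} and then invoking Theorem~\ref{thm:specuniexp} for $T^*$, using the conjugation invariance of $\mathbb T$ just as you do. No gaps; your bookkeeping of the conjugation is the only point that needs care, and you handle it correctly.
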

\vskip10pt

For the illustration of Theorems~\ref{thm:specshad} and~\ref{thm:refspecshad}, let us revisit the following example due to Eisenberg and Hedlund~\cite{Eis}.

\vskip10pt
\begin{example}\label{ex:illust}
{\rm 
Let~$X$ be an infinite dimensional separable complex Hilbert space with orthonormal basis $(e_n)_{n\in\mathbb Z}$ and consider the bilateral weighted shifts~$T$ and $S$ of~$X$ defined by
\begin{equation*}
Te_n=
\begin{cases}2\sqrt{2}e_{n+1}\qquad&\text{for $n\geq0$},\\[6pt]
\dfrac{1}{2\sqrt{2}}e_{n+1}\qquad&\text{for $n\leq-1$},\\[6pt]	
\end{cases}
\end{equation*}
and
\begin{equation*}
Se_n=
\begin{cases}2\sqrt{2}e_{n-1}\qquad&\text{for $n\geq1$},\\[6pt]
\dfrac{1}{2\sqrt{2}}e_{n-1}\qquad&\text{for $n\leq0$},\\	
\end{cases}
\end{equation*}
respectively. We will show that~$T$ and~$S$ are nonhyperbolic invertible operators operators, $T$ is uniformly expansive without the shadowing property, while~$S$ has the shadowing property without being uniformly expansive.

It is easy to verify that $S=T^*$ so that $S^*=T$. Eisenberg and Hedlund~\cite[Example~4]{Eis} have shown that
\begin{gather*}
	\sigma(S^*)=\sigma (T)=\biggl\{\,\lambda\in\mathbb C: \frac{1}{2\sqrt{2}}\leq|\lambda|\leq 2\sqrt{2}\,\biggr\},\\[6pt]
	\sigma_p(S^*)=\sigma_p(T)=\emptyset,\\[6pt]
	\sigma_p(S)=\sigma_p(T^*)=\biggl\{\,\lambda\in\mathbb C: \frac{1}{2\sqrt{2}}<|\lambda|<2\sqrt{2}\,\biggr\},
	\end{gather*}
 and
 \begin{equation*}
 \sigma_a(S^*)=\sigma_a(T)=\biggl\{\,\lambda\in\mathbb C: \text{$|\lambda|=\frac{1}{2\sqrt{2}}$\, or\, $|\lambda|=2\sqrt{2}$}\,\biggr\}.
\end{equation*}
Clearly, $\mathbb T\subset\sigma(T)=\sigma(S^*)=(\sigma(S))^*$ and hence $\mathbb T=\mathbb T^*\subset\sigma(S)$. Therefore, neither~$T$ nor~$S$ is hyperbolic.
 Since $\sigma_a(S^*)\cap\mathbb T=\sigma_a(T)\cap\mathbb T=\emptyset$, Theorems~\ref{thm:specuniexp} and~\ref{thm:refspecshad} imply that~$S$ has the shadowing property and $T$ is uniformly expansive.  Finally, from the relations $\mathbb T\subset\sigma_p(S)\subset\sigma_a(S)$ and $\mathbb T\subset\sigma_p(T^*)\subset\sigma_a(T^*)$, using Theorems~\ref{thm:specuniexp} and~\ref{thm:refspecshad} again, we conclude~$S$ is not uniformly expansive and~$T$ does not have the shadowing property. 

For a complete characterization of shadowing and expansivity of the classical bilateral shifts and their spectral properties, see the recent papers by Bernardes~{\it al.}~\cite{Ber}, \cite{BeMe} and D'Aniello~{\it et al.}~\cite{Ani}, \cite{AnMa}.
}
\end{example}
\vskip10pt
The paper is organized as follows. In Sec.~\ref{sec:prelim}, we present several preliminary results 
which will be needed in the proof of Theorem~\ref{thm:specshad}. 
The proof of Theorem~\ref{thm:specshad} is given in Sec.~\ref{sec:proof}.

\section{Preliminaries}\label{sec:prelim}
The following proposition provides characterizations of the one-sided spectra of operators on  Hilbert spaces.

\vskip10pt
\begin{proposition}
\label{prop:rightsprop}
{\rm \cite[Chap.~XI, Par.~1, Proposition~1.1, p.~347]{Con}}\,
If $X$ is a complex Hilbert space, $T\in L(X)$ and $\lambda\in\mathbb C$, then the following statements are equivalent. \begin{enumerate}[label={(\roman*)},itemindent=1em]
\item[\rm (i)] $\lambda\notin\sigma_r(T)$.\label{lm:rightsprop:it1}
\item[\rm (ii)] $\overline{\lambda}\notin\sigma_a(T^*)$.\label{lm:rightsprop:it2}
\item[\rm (iii)] $\lambda I-T$ is surjective.\label{lm:rightsprop:it13}
\item[\rm (iv)] $\overline{\lambda}\notin\sigma_l(T^*)$.\label{lm:rightsprop:it4}
\end{enumerate}
In particular, we have that
\begin{equation*}
	\sigma_r(T)=(\sigma_a(T^*))^*\qquad\text{and}\qquad
	\sigma_l(T)=\sigma_a(T).
	\end{equation*}
\end{proposition}
\vskip10pt

The following simple result will be needed
in the proof of Theorem~\ref{thm:specshad}. Its first part was already observed by Bernardes and Peris~\cite{BePe} (see \cite[Proposition~29~(c) and Remark~45]{BePe}). We present its proof for completeness.

\vskip10pt
\begin{proposition}\label{prop:unitshad}
Let $X$ be a complex Banach space, $T\in GL(X)$ and $\lambda\in\mathbb T$. If $T$ has the shadowing property, then so does $\lambda^{-1}T$. If, in addition, we assume that $X$ is a Hilbert space, then $\lambda\in\sigma_r(T)$ if and only if $1\in\sigma_r(\lambda^{-1}T)$.
\end{proposition}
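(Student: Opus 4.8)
The plan is to prove the two assertions of Proposition~\ref{prop:unitshad} in order. The first claim concerns general complex Banach spaces and asks that shadowing be preserved when we rescale $T$ by a unimodular factor $\lambda^{-1}$. Since $|\lambda|=1$, multiplication by $\lambda^{-1}$ is an isometry of $X$, and the powers $(\lambda^{-1}T)^n=\lambda^{-n}T^n$ differ from $T^n$ only by the unimodular scalar $\lambda^{-n}$. First I would take an arbitrary $\epsilon>0$ and, using the shadowing of $T$, obtain the corresponding $\delta>0$. Then, given a $\delta$-pseudotrajectory $(x_n)_{n\in\mathbb Z}$ of $\lambda^{-1}T$, I would transform it into a $\delta$-pseudotrajectory of $T$ by setting $y_n=\lambda^n x_n$; the key computation is $\|y_{n+1}-Ty_n\|=\|\lambda^{n+1}x_{n+1}-\lambda^{n+1}(\lambda^{-1}T)x_n\|=|\lambda|^{n+1}\,\|x_{n+1}-(\lambda^{-1}T)x_n\|=\|x_{n+1}-(\lambda^{-1}T)x_n\|\leq\delta$. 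Applying the shadowing of $T$ to $(y_n)$ produces a point $x$ with $\|y_n-T^n x\|\leq\epsilon$ for all $n$, and multiplying back by $\lambda^{-n}$ gives $\|x_n-(\lambda^{-1}T)^n x\|=\|\lambda^{-n}y_n-\lambda^{-n}T^n x\|\leq\epsilon$, so the same $x$ shadows the original pseudotrajectory. This establishes the shadowing of $\lambda^{-1}T$.

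For the second assertion I would work in the Hilbert space setting and exploit the algebraic identity relating the resolvent-type operators of $T$ and $\lambda^{-1}T$. The natural observation is that $I-\lambda^{-1}T=\lambda^{-1}(\lambda I-T)$, so that $I-\lambda^{-1}T$ and $\lambda I-T$ differ only by the unimodular (hence invertible) scalar factor $\lambda^{-1}$. Right invertibility is unaffected by multiplication by an invertible scalar: if $S$ is a right inverse of $\lambda I-T$, then $\lambda S$ is a right inverse of $I-\lambda^{-1}T$, and conversely. Therefore $\lambda I-T$ is right invertible if and only if $I-\lambda^{-1}T=1\cdot I-\lambda^{-1}T$ is right invertible, which is precisely the statement that $\lambda\in\sigma_r(T)$ if and only if $1\in\sigma_r(\lambda^{-1}T)$. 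Strictly speaking this scalar argument already holds in any Banach space, but the proposition states it only under the Hilbert space hypothesis, presumably because the right spectrum is most naturally used via Proposition~\ref{prop:rightsprop} in that setting; I would phrase the verification directly through the definition of $\sigma_r$ to keep it self-contained.

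I do not anticipate a genuine obstacle here, as both parts reduce to bookkeeping with unimodular scalars. The only point requiring a little care is the first part: one must check that the map $(x_n)\mapsto(\lambda^n x_n)$ sends $\delta$-pseudotrajectories of $\lambda^{-1}T$ exactly to $\delta$-pseudotrajectories of $T$ (with the \emph{same} $\delta$, owing to $|\lambda|=1$) and that the shadowing relation is likewise preserved under the inverse rescaling, so that a single $x$ works for both systems. Once that correspondence is set up cleanly, both implications follow immediately, and the second assertion is a one-line consequence of factoring out the scalar $\lambda^{-1}$.
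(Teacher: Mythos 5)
Your proposal is correct. The first part coincides with the paper's own argument: the same rescaling $(x_n)\mapsto(\lambda^n x_n)$ between pseudotrajectories of $\lambda^{-1}T$ and of $T$, the same unimodular cancellation in both the pseudotrajectory estimate and the shadowing estimate, and the same single shadowing point $x$ serving both systems. The second part, however, takes a genuinely different and more elementary route. The paper factors $\lambda I-T=\lambda(I-\lambda^{-1}T)$, observes that surjectivity of the two operators is equivalent, and then converts surjectivity into non-membership in $\sigma_r$ via the Hilbert space characterization of Proposition~\ref{prop:rightsprop}. You instead argue directly from the definition of right invertibility: a right inverse $S$ of $\lambda I-T$ gives the right inverse $\lambda S$ of $I-\lambda^{-1}T$, and conversely, so $\lambda\notin\sigma_r(T)$ if and only if $1\notin\sigma_r(\lambda^{-1}T)$ with no spectral machinery at all. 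As you correctly note, this shows the second assertion holds in an arbitrary complex Banach space; the Hilbert space hypothesis is needed only because the paper's proof chooses to route through Proposition~\ref{prop:rightsprop}, not because the claim requires it. Your version is thus slightly more general and self-contained, while the paper's version has the minor advantage of exercising the surjectivity characterization that it reuses later in the proof of the main theorem.
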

\vskip10pt

\begin{proof}Let $\lambda\in\mathbb T$ and suppose that $T\in GL(X)$ has the shadowing property.
Let $\epsilon>0$ be arbitrary. 
Choose $\delta>0$ such that every $\delta$-pseudotrajectory of~$T$ is $\epsilon$-shadowed by a real trajectory of~$T$. (Since~$T$ has the shadowing property, such a $\delta$ certainly exists.)
Let $(y_n)_{n\in\mathbb Z}$ be an arbitrary $\delta$-pseudotrajectory of~$\lambda^{-1}T$.
Define $x_n=\lambda^{n}y_n$ for $n\in\mathbb Z$. Since $|\lambda|=1$, we have for $n\in\mathbb Z$,
\begin{equation*}
	\|x_{n+1}-Tx_n\|=\|\lambda^{n+1}(y_{n+1}-\lambda^{-1}T y_n)\|=\|y_{n+1}-\lambda^{-1}T y_n\|\leq\delta. 
\end{equation*}
Therefore, $(x_n)_{n\in\mathbb Z}$ is a $\delta$-pseudotrajectory of~$T$. By the choice of~$\delta$, there exists $x\in X$ such that	
$\|x_n-T^n x\|\leq\epsilon$ for all $n\in\mathbb Z$. Hence,
\begin{equation*}
	\|y_{n}-(\lambda^{-1}T)^n x\|=\|\lambda^{-n}(x_n-T^n x)\|=\|x_n-T^n x\|\leq\epsilon
	\end{equation*}
for all $n\in\mathbb Z$, which shows that $(y_n)_{n\in\mathbb Z}$ is $\epsilon$-shadowed by a real trajectory of~$\lambda^{-1}T$.
Consequently, $\lambda^{-1}T$ has the shadowing property.

Now assume that $X$ is a Hilbert space and $\lambda\in\mathbb T$. Since 
\begin{equation*}
\lambda I-T=\lambda(I-\lambda^{-1}T),
\end{equation*} 
the surjectivity of $\lambda I-T$ is equivalent to that of $I-\lambda^{-1}T$. From this, by the application of Proposition~\ref{prop:rightsprop}, we conclude that $\lambda\notin\sigma_r(T)$ if and only if $1\notin\sigma_r(\lambda^{-1}T)$.
\end{proof}
\vskip10pt

Given a complex Banach space~$X$, let $l_1(X)$ and $l_\infty(X)$ denote the Banach space of all sequences $(x_n)_{n\in\mathbb Z}$ in~$X$ such that
\begin{equation*}
	\|x\|_1:=\sum_{n=-\infty}^{\infty}\|x_n\|<\infty
\end{equation*}
and
\begin{equation*}
	\|x\|_\infty:=\sup_{n\in\mathbb Z}\|x_n\|<\infty,
\end{equation*}
respectively. 
Now suppose that $X$ is a complex Hilbert space and $T\in GL(X)$. Define operators $\mathcal B\colon l_1(X)\rightarrow l_1(X)$ and $\mathcal S\colon l_\infty(X)\rightarrow l_\infty(X)$ by
\begin{equation}\label{eq:bopdef}
\mathcal B \left((x_n)_{n\in\mathbb Z}\right)=(x_{n-1}-T^* x_n\,)_{n\in\mathbb Z}\qquad\text{for $(x_n)_{n\in\mathbb Z}\in l_1(X)$}
\end{equation}
and
\begin{equation}\label{eq:sopdef}
\mathcal S\left((x_n)_{n\in\mathbb Z}\right)=(x_{n+1}-T x_n\,)_{n\in\mathbb Z}\qquad\text{for $(x_n)_{n\in\mathbb Z}\in l_\infty(X)$},
\end{equation}
respectively. It is easily seen that both operators are bounded and linear. The following shadowing criterion will play an important role in the proof of Theorem~\ref{thm:specshad}.

\vskip10pt
\begin{proposition}\label{prop:shadcrit1}
Let $X$ be a complex Hilbert space and $T\in GL(X)$. If operators~$\mathcal B$ and~$\mathcal S$ are defined as above, then the following statements are equivalent.
\begin{enumerate}[label={(\roman*)},itemindent=1em]
\item[\rm (i)] $T$ has the shadowing property.\label{prop:shadcrit1:c1}
\item[\rm (ii)] $\mathcal S$ is surjective.\label{prop:shadcrit1:c2}
\item[\rm (iii)] $\mathcal B$ is bounded below.\label{prop:shadcrit1:c3}
\end{enumerate}
\end{proposition}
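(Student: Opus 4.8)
The plan is to prove the two equivalences (i)$\iff$(ii) and (ii)$\iff$(iii) separately. The first is the standard translation of the shadowing property into a surjectivity statement for the forward-difference operator $\mathcal S$; the second rests on the observation that $\mathcal S$ is the Banach-space adjoint of $\mathcal B$, after which the equivalence is a direct application of the duality between being bounded below and having a surjective adjoint.

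For (ii)$\implies$(i) I would use the open mapping theorem: if $\mathcal S$ is onto, there is $C>0$ so that every $d\in l_\infty(X)$ admits $u\in l_\infty(X)$ with $\mathcal S u=d$ and $\|u\|_\infty\le C\|d\|_\infty$. Given $\epsilon>0$, put $\delta=\epsilon/C$; for a $\delta$-pseudotrajectory $(x_n)$ the errors $d_n=x_{n+1}-Tx_n$ satisfy $\|d\|_\infty\le\delta$, so choosing $u$ with $\mathcal S u=d$ and $\|u\|_\infty\le\epsilon$ and setting $v_n=x_n-u_n$ gives $v_{n+1}=Tv_n$, whence $v_n=T^nv_0$; then $x=v_0$ shadows $(x_n)$ since $x_n-T^nx=u_n$. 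For the converse (i)$\implies$(ii), given a nonzero $d\in l_\infty(X)$ I would apply shadowing with $\epsilon=1$ to obtain $\delta$, rescale $\tilde d=(\delta/\|d\|_\infty)d$, and build a $\delta$-pseudotrajectory with $x_{n+1}-Tx_n=\tilde d_n$ via the recursion $x_0=0$, $x_{n+1}=Tx_n+\tilde d_n$, $x_{n-1}=T^{-1}(x_n-\tilde d_{n-1})$ (here invertibility of $T$ is used, and no boundedness of $(x_n)$ is required). Shadowing then produces $u_n=x_n-T^nx$ with $\|u\|_\infty\le1$ and $\mathcal S u=\tilde d$; rescaling exhibits $d$ in the range of $\mathcal S$.

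For (ii)$\iff$(iii) I would exploit that, since $X$ is a Hilbert space, $l_1(X)^*$ is isometrically identified with $l_\infty(X)$ through the pairing $\langle x,y\rangle=\sum_{n\in\mathbb Z}\langle x_n,y_n\rangle$. Reindexing the shift and using $\langle T^*a,b\rangle=\langle a,Tb\rangle$, a short computation yields $\langle \mathcal Bx,y\rangle=\langle x,\mathcal Sy\rangle$ for all $x\in l_1(X)$ and $y\in l_\infty(X)$, so that $\mathcal S$ is precisely the adjoint $\mathcal B^*$ (and surjectivity of $\mathcal B^*$ coincides with that of $\mathcal S$). The closed range theorem then gives that $\mathcal B$ is bounded below if and only if $\mathcal B^*$ is surjective, which is exactly (iii)$\iff$(ii).

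The routine parts are the recursion and the open-mapping estimate in (i)$\iff$(ii). I expect the main obstacle to be getting the adjoint identification in (ii)$\iff$(iii) exactly right: this is where the Hilbert-space hypothesis is essential (it supplies $l_1(X)^*=l_\infty(X)$ and explains the appearance of $T^*$ in the definition of $\mathcal B$), and one must carefully match the backward difference with $T^*$ defining $\mathcal B$ against the forward difference with $T$ defining $\mathcal S$, including the index shift, before invoking the precise form of the closed range theorem that equates boundedness below with surjectivity of the adjoint.
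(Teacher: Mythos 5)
Your proposal is correct, and its two halves relate to the paper's proof in different ways. For (ii)$\iff$(iii) you follow exactly the paper's route: identify the dual of $l_1(X)$ with $l_\infty(X)$ through the pairing $y(x)=\sum_{n\in\mathbb Z}\langle x_n,y_n\rangle$, check that under this identification the (normed-space) adjoint of $\mathcal B$ is $\mathcal S$, and invoke the duality between boundedness below and surjectivity of the adjoint (the paper quotes this as Proposition~\ref{prop:surj} from \cite{Mul}; your appeal to the closed range theorem yields the same statement). One caveat worth a remark: the identification $y\mapsto\sum_{n}\langle\cdot,y_n\rangle$ is conjugate-linear, so $\mathcal S$ equals $\mathcal B'$ only up to a conjugate-linear isometric bijection; since such a bijection preserves surjectivity, the conclusion is unaffected (the paper elides the same point). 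For (i)$\iff$(ii) you part ways with the paper, but only in that the paper does not prove this equivalence at all: it cites \cite[Proposition~3]{Cir}. Your direct argument --- the open mapping theorem producing a bounded-preimage constant $C$ for (ii)$\Rightarrow$(i), and for (i)$\Rightarrow$(ii) the two-sided recursion $x_{n+1}=Tx_n+\tilde d_n$ (which uses invertibility of $T$ and, as you correctly note, never requires $(x_n)$ to be bounded) followed by shadowing with $\epsilon=1$ and rescaling --- is complete and is essentially the argument behind the cited result, so your write-up buys self-containedness where the paper relies on a reference, at the cost of reproducing a known proof.
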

\vskip10pt

The first part of the proposition, the equivalence of~(i) and~(ii), 
is well-known (see \cite{Pil}, \cite[Lemma~10]{Ber} and \cite[Proposition~3]{Cir})). The above formulation is taken from \cite[Proposition~3]{Cir}.
The proof of the second part will be based on a surjectivity criterion for Banach space operators formulated in the next proposition. Recall that if $X$ is a complex Banach space and $T\in L(X)$, then the (normed-space) adjoint operator~$T'\in L(X')$ is defined by $T'(f)=f\circ T$ for $f\in X'$, where~$X'$ denotes the dual space of~$X$, the space of all bounded linear forms $f\colon X\rightarrow\mathbb C$.

\vskip10pt
\begin{proposition}\label{prop:surj}
{\rm \cite[Chap.~II, Par.~9, Theorem~5, p.~86]{Mul}}
If $X$ is complex Banach space and $T\in L(X)$, then the following statements are equivalent.
\begin{enumerate}[label={(\roman*)},itemindent=1em]
\item[\rm (i)] $T'$ is surjective.\label{prop:surj:c1}
\item[\rm (ii)] $T$ is bounded below.\label{prop:surj:c1}
\end{enumerate}
\end{proposition}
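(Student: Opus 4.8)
The plan is to prove the two implications separately, relying on the Hahn--Banach theorem in both directions and on the open mapping theorem for the harder one. Throughout I would use that the dual space $X'$ is itself a complex Banach space, so that $T'\in L(X')$ acts between Banach spaces and the open mapping theorem applies.

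For the implication (ii)$\Rightarrow$(i), I would start from the assumption that $T$ is bounded below, say $\|Tx\|\geq\alpha\|x\|$ for all $x\in X$ with some $\alpha>0$. This forces $T$ to be injective and to have closed range $Y:=T(X)$, and the inverse map $T^{-1}\colon Y\rightarrow X$ then satisfies $\|T^{-1}y\|\leq\alpha^{-1}\|y\|$ for $y\in Y$. Given an arbitrary $f\in X'$, I would define a linear form $h$ on $Y$ by $h(y)=f(T^{-1}y)$, which is bounded since $|h(y)|\leq\alpha^{-1}\|f\|\,\|y\|$, and then extend $h$ to some $g\in X'$ by the Hahn--Banach theorem. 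The computation $(T'g)(x)=g(Tx)=h(Tx)=f(x)$ for all $x\in X$ shows $T'g=f$, proving surjectivity of $T'$.

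For the implication (i)$\Rightarrow$(ii), I would invoke the open mapping theorem: since $T'$ is a surjective bounded operator between the Banach spaces $X'$ and $X'$, it is open, so there exists a constant $c>0$ such that for every $f\in X'$ one can solve $T'g=f$ with $\|g\|\leq c^{-1}\|f\|$. Then, for an arbitrary $x\in X$, I would use Hahn--Banach to choose a norming functional $f\in X'$ with $\|f\|=1$ and $f(x)=\|x\|$, solve $T'g=f$ subject to the above norm bound, and estimate $\|x\|=f(x)=(T'g)(x)=g(Tx)\leq\|g\|\,\|Tx\|\leq c^{-1}\|Tx\|$, which yields $\|Tx\|\geq c\|x\|$ and hence shows that $T$ is bounded below.

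The main obstacle is the extraction of the uniform constant $c$ in the direction (i)$\Rightarrow$(ii): bare surjectivity of $T'$ does not by itself furnish a quantitative solvability estimate, and it is precisely the open mapping theorem---hence the completeness of $X'$---that upgrades surjectivity to the openness needed for the norming-functional argument. The remaining ingredients (the closedness of the range of a bounded-below operator and the Hahn--Banach extension and norming steps) are routine.
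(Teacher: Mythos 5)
Your proof is correct. Note that the paper offers no proof of this proposition at all---it is quoted directly from M\"uller's book \cite{Mul}---so there is no internal argument to compare against; your two implications (Hahn--Banach extension along the range of the bounded-below operator for (ii)$\Rightarrow$(i), and the open mapping theorem in $X'$ combined with a norming functional for (i)$\Rightarrow$(ii)) constitute the standard textbook proof, and you have correctly identified the open mapping theorem as the step that converts bare surjectivity of $T'$ into the quantitative solvability estimate needed for the lower bound.
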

\vskip10pt

\begin{proof}[Proof of Proposition~\ref{prop:shadcrit1}]
As noted above, we need only to prove the equivalence of~(ii) and~(iii). It is known (see, e.g., K\"othe~\cite[Chap.~5, Par.~26, Sec.~8, p.~359]{Kot}) that the dual space of $l_1(X)$ is $l_\infty(X)$ if we identify each element $(y_n)_{n\in\mathbb Z}\in l_\infty(X)$ with the bounded linear form $y\colon l_1(X)\rightarrow\mathbb C$ 
defined  by
\begin{equation*}
	y(x)=\sum_{n=-\infty}^\infty \langle x_n,y_n\rangle\qquad\text{for $x=(x_n)_{n\in\mathbb Z}\in l_1(X)$}.
\end{equation*}
By the definition of the adjoint operator, we have for all $y=(y_n)_{n\in\mathbb Z}\in l_\infty(X)$ and $x=(x_n)_{n\in\mathbb Z}\in l_1(X)$,
\begin{align*}
	&(\mathcal B'(y))(x)=(y\circ\mathcal B)(x)=y(\mathcal B(x))
	=\sum_{n=-\infty}^\infty \langle x_{n-1}-T^* x_n,y_n\rangle\\
	&=\sum_{n=-\infty}^\infty\langle x_{n-1},y_n\rangle-\sum_{n=-\infty}^\infty \langle T^*x_n,y_n\rangle
	=\sum_{n=-\infty}^\infty \langle x_n,y_{n+1}\rangle-\sum_{n=-\infty}^\infty \langle x_n,(T^*)^*y_n\rangle\\
	&=\sum_{n=-\infty}^\infty \langle x_n,y_{n+1}-Ty_n\rangle=(\mathcal S(y))(x).
\end{align*}
Since $x\in l_1(X)$ and $y\in l_\infty(X)$ were arbitrary, this shows that $\mathcal B'=\mathcal S$ and the equivalence of~(ii) and~(iii)
follows from Proposition~\ref{prop:surj} with $T=\mathcal B$ and~$X$ replaced with $l_1(X)$.	
\end{proof}
\vskip10pt

The next proposition gives a sufficient condition for shadowing which is interesting in its own right. In the proof of Theorem~\ref{thm:specshad}, we will see that in Hilbert spaces this condition is not only sufficient, but also necessary for shadowing.

\vskip10pt
\begin{proposition}\label{prop:shadcrit2}
Let $X$ be a complex Banach space and $T\in GL(X)$. If there exists 
$B\in L(X)$
such that
\begin{equation}\label{eq:contr1}
r_+=\limsup_{n\to\infty}\root{n}\of{\|T^n B\|}<1	
\end{equation}
and
\begin{equation}\label{eq:contr2}
r_-=\limsup_{n\to\infty}\root{n}\of{\|T^{-n}(I- B)\|}<1,
\end{equation}
then $T$ has the shadowing property.
\end{proposition}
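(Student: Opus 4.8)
The plan is to prove shadowing directly, using the operator $B$ to build, for any given pseudotrajectory, a uniformly bounded ``correction'' sequence, and then exhibiting the shadowing point explicitly (so that no Hilbert-space structure is needed). Let $\epsilon>0$ be given, let $(x_n)_{n\in\mathbb Z}$ be a $\delta$-pseudotrajectory of $T$, and write $e_n=x_{n+1}-Tx_n$, so that $\|e_n\|\le\delta$ for all $n$. The heart of the matter is to solve the inhomogeneous recurrence $w_{n+1}-Tw_n=e_n$ with a bounded sequence $(w_n)_{n\in\mathbb Z}$. Here $B$ and $I-B$ play the role of (not necessarily complementary) projections onto a ``stable'' and an ``unstable'' direction, and I would set
\begin{equation*}
w_n=\sum_{j=0}^{\infty}T^{j}B\,e_{n-1-j}-\sum_{i=0}^{\infty}T^{-(i+1)}(I-B)\,e_{n+i},
\end{equation*}
summing the stable part forward and the unstable part backward. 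This is precisely the Green's-function solution associated with the dichotomy furnished by $B$.

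Next I would check convergence and a uniform bound. Since $r_+<1$, choosing $\rho$ with $r_+<\rho<1$ gives $\|T^{j}B\|\le\rho^{j}$ for all large $j$, so $C_+:=\sum_{j=0}^{\infty}\|T^{j}B\|<\infty$ by the root test; symmetrically $r_-<1$ yields $C_-:=\sum_{i=0}^{\infty}\|T^{-(i+1)}(I-B)\|<\infty$. Because $X$ is complete, absolute convergence guarantees that each series defining $w_n$ converges in $X$, and the estimate
\begin{equation*}
\|w_n\|\le\Bigl(\sum_{j=0}^{\infty}\|T^{j}B\|+\sum_{i=0}^{\infty}\|T^{-(i+1)}(I-B)\|\Bigr)\sup_{k}\|e_k\|\le(C_++C_-)\,\delta
\end{equation*}
holds uniformly in $n$. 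Thus $K:=C_++C_-$ depends only on $T$ and $B$, not on the chosen pseudotrajectory.

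Then I would verify that $(w_n)$ solves the recurrence: shifting indices in the defining series and subtracting $Tw_n$ from $w_{n+1}$, every term cancels in pairs except a single surviving $Be_n$ from the stable sum and $(I-B)e_n$ from the unstable sum, whose sum is exactly $e_n$; hence $w_{n+1}-Tw_n=e_n$. To finish, set $u_n=x_n-w_n$. Then $u_{n+1}-Tu_n=e_n-e_n=0$, so $u_n=T^{n}u_0$ for all $n\in\mathbb Z$ by the invertibility of $T$; taking $x=u_0=x_0-w_0$ gives $x_n-T^{n}x=w_n$, whence $\|x_n-T^{n}x\|=\|w_n\|\le K\delta$ for every $n$. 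Choosing $\delta=\epsilon/K$ (the case $K=0$ being trivial) yields $\|x_n-T^{n}x\|\le\epsilon$, which is shadowing. The main obstacle is the passage from the spectral-radius-type hypotheses~\eqref{eq:contr1}--\eqref{eq:contr2} to honest summability of the two operator series: the $\limsup$ conditions only control the asymptotic growth of $\|T^{j}B\|$ and $\|T^{-(i+1)}(I-B)\|$, so one must extract genuine geometric domination by some $\rho^{j}$, $\rho<1$, via the root test in order to obtain a uniform bound $K$ independent of the pseudotrajectory; once that is in place, the remainder is the telescoping identity and the explicit assembly of the shadowing point.
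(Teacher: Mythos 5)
Your proof is correct, and its core construction is the same as the paper's: your formula for $w_n$ is exactly the paper's series~\eqref{eq:xdef} after the reindexing $k=i+1$, and in both arguments the summability of the two operator series is extracted from the hypotheses~\eqref{eq:contr1}--\eqref{eq:contr2} by the root test. Where you genuinely diverge is in how the series is converted into shadowing. The paper applies it to an arbitrary $z\in l_\infty(X)$ to show that the operator $\mathcal S$ of~\eqref{eq:sopdef} is surjective, and then invokes the criterion of Cirilo \emph{et al.}~\cite[Proposition~3]{Cir} (the equivalence (i)$\Leftrightarrow$(ii) of Proposition~\ref{prop:shadcrit1}) to conclude. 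You instead apply the series directly to the error sequence $e_n=x_{n+1}-Tx_n$ of the given pseudotrajectory, verify the telescoping identity $w_{n+1}-Tw_n=Be_n+(I-B)e_n=e_n$, and assemble the shadowing point $x=x_0-w_0$ explicitly. Your route is therefore self-contained (no $l_\infty$ framework, no external citation) and yields slightly more than the statement asks: the estimate $\|x_n-T^nx\|\le K\delta$ with $K=C_++C_-$ depending only on $T$ and $B$ shows that the shadowing is in fact Lipschitz in $\delta$. What the paper's packaging buys is economy and reuse: the surjectivity of $\mathcal S$ is precisely the form of the conclusion that is plugged into Proposition~\ref{prop:shadcrit1} in the proof of Theorem~\ref{thm:specshad}. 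One cosmetic remark: the degenerate case $K=0$ you set aside cannot actually occur, since $C_+=0$ forces $B=0$ and then $C_-=0$ would force $T^{-1}=0$, contradicting invertibility of~$T$.
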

\vskip10pt

The result of Proposition~\ref{prop:shadcrit2} is not really new. Its proof is an adaptation of the proof of a shadowing result established by Bernardes \emph{et al.}~\cite[Theorem~A]{Ber}.
 It can also be deduced from a more general result by Backes, Dragi\v cevi\'c and~Singh~\cite[Theorem~3]{BaDrSi}. For completeness, we give an alternative short proof.
\vskip10pt

\begin{proof}
Let $z=(z_n)_{n\in\mathbb Z}\in l_\infty(X)$.  Define a sequence $x=(x_n)_{n\in\mathbb Z}$ by
\begin{equation}\label{eq:xdef}
x_n=\sum_{k=0}^\infty T^k Bz_{n-k-1}-\sum_{k=1}^\infty T^{-k}(I-B)z_{n+k-1}\qquad\text{for $n\in\mathbb Z$}.
\end{equation}
Choose a positive constant~$q$ such that
\begin{equation*}
	\max\{\,r_+,r_-\,\}<q<1.
\end{equation*}
In view of~\eqref{eq:contr1} and~\eqref{eq:contr2} such a~$q$ certainly exists and for some $K>0$,
\begin{equation*}
\|T^k B\|\leq Kq^k	\qquad\text{for all $k=0,1,2,\dots$}
\end{equation*}
and
\begin{equation*}
\|T^{-k}(I-B)\|\leq Kq^k\qquad\text{for all $k=0,1,2,\dots$}
\end{equation*}
From~\eqref{eq:xdef} and the last two inequalities, we obtain for $n\in\mathbb Z$,
\begin{align*}
\|x_n\|&\leq\sum_{k=0}^\infty\|T^k B\|\|z_{n-k-1}\|+\sum_{k=1}^\infty\|T^{-k}(I-B)\|\|z_{n+k-1}\|\\
&\leq K\|z\|_\infty\biggl(\sum_{k=0}^\infty q^k+\sum_{k=1}^\infty q^k\biggr)=K\frac{1+q}{1-q}\|z\|_\infty.
\end{align*}	
This shows that $x$ is well-defined and $x\in l_\infty(X)$. From~\eqref{eq:xdef}, it follows easily that 
\begin{equation*}
x_{n+1}=T x_n+z_n\qquad\text{for all $n\in\mathbb Z$}.
\end{equation*}
Therefore, $x\in l_\infty(X)$ and $\mathcal S(x)=z$ with~$\mathcal S$ as in~\eqref{eq:sopdef}. Since $z\in l_\infty(X)$ was arbitrary, this proves that~$\mathcal S$ is surjective. According to \cite[Proposition~3]{Cir}, 
the surjectivity of~$\mathcal S$ implies that~$T$ has the shadowing property.
\end{proof}
\vskip10pt

In the proof of Theorem~\ref{thm:specshad}, we shall use some facts about operator-valued holomorphic functions (see, e.g., \cite[Chap.~III, Sec.~3.11]{Hil}). If $X$ is a complex Banach space and
$R\colon A\rightarrow L(X)$ is an operator-valued holomorphic function defined on the annulus 
\begin{equation}\label{eq:annul}
A=\{\,\textcolor{blue}{\lambda}\in\mathbb C\mid r_1<|\lambda-\lambda_0|<r_2\,\}
\end{equation}
with $\lambda_0\in\mathbb C$ and $0\leq r_1<r_2\leq\infty$,
then $R$ can be expanded into a Laurent series
\begin{equation*}\label{eq:genlaur}
R(\lambda)=\sum_{n=-\infty}^\infty C_n(\lambda-\lambda_0)^n,
\quad r_1<|\lambda-\lambda_0|<r_2,
\end{equation*}
where the unique coefficients $C_n\in L(X)$, $n\in\mathbb Z$, can be expressed in the form of a contour integral
\begin{equation}\label{eq:intrep}
C_n=\frac{1}{2\pi i}\int_{\gamma_r}(\lambda-\lambda_0)^{-n-1}R(\lambda)\,d\lambda,\qquad n\in\mathbb Z,
\end{equation}
$\gamma_r$ being any positively oriented circle $|\lambda-\lambda_0|=r$ with radius $r\in(r_1,r_2)$.
This implies that 
\begin{equation}\label{eq:limsup1}
\limsup_{n\to\infty}\root{n}\of{\|C_n\|}\leq\frac{1}{r_2}
\end{equation}
and
\begin{equation}\label{eq:limsup2}
\limsup_{n\to\infty}\root{n}\of{\|C_{-n}\|}\leq\ r_1.
\end{equation}
Indeed, from~\eqref{eq:intrep}, we have for $n\in\mathbb Z$
 and $r\in(r_1,r_2)$, 
 \begin{equation*}
\|C_n\|\leq M(r)\,r^{-n},\qquad\text{where $M(r)=\max_{|\lambda-\lambda_0|=r}\|R(\lambda)\|$}.
\end{equation*}
From this, we find for $n\geq2$ and $r\in(r_1,r_2)$,
\begin{equation*}
\root{n}\of{\|C_n\|}\leq 
 \frac{\root{n}\of {M(r)}}{r}
\end{equation*}
and
\begin{equation*}
\root{n}\of{\|C_{-n}\|}\leq r\root{n}\of{M(r)}.
\end{equation*}
Hence,
\begin{equation*}
\limsup_{n\to\infty}\root{n}\of{\|C_n\|}\leq\frac{1}{r}
\end{equation*}
and
\begin{equation*}
\limsup_{n\to\infty}\root{n}\of{\|C_{-n}\|}\leq\ r
\end{equation*}
whenever  $r\in(r_1,r_2)$. The required inequalities~\eqref{eq:limsup1} and~\eqref{eq:limsup2} follow by passing to the limits as $r\to r_2-$ and $r\to r_1+$, respectively.

Finally, we formulate a useful result about the existence of holomorphic right inverses of holomorphic operator-valued functions due to Allan~\cite{All} in a form given by Ivanov~\cite{Iva}.

\vskip10pt
\begin{proposition}\label{prop:holright}
{\rm \cite[Corollary~3.10]{Iva}}
Suppose that $X$ is a complex Banach space and $G$ a domain (an open and connected set) in the complex plane. If $H\colon G\rightarrow L(X)$
is a holomorphic function such that $H(\lambda)$ is right invertible for every $\lambda\in G$, then there exists a holomorphic function $R\colon G\rightarrow L(X)$ satisfying
$H(\lambda)R(\lambda)=I$ for all $\lambda\in G$. 	
\end{proposition}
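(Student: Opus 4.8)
The plan is to obtain $R$ by producing holomorphic right inverses on small disks and then gluing them by a $\overline\partial$-argument that exploits the fact that $G$ lives in one complex variable. For the local step, fix $\lambda_0\in G$ and choose $R_0\in L(X)$ with $H(\lambda_0)R_0=I$. The map $\lambda\mapsto H(\lambda)R_0$ is holomorphic and equals $I$ at $\lambda_0$; since the invertible operators form an open subset of $L(X)$ on which inversion is holomorphic, there is an open disk $D\ni\lambda_0$ on which $H(\lambda)R_0$ is invertible with holomorphic inverse, and then $R_{\lambda_0}(\lambda)=R_0(H(\lambda)R_0)^{-1}$ is holomorphic on $D$ with $H(\lambda)R_{\lambda_0}(\lambda)=I$. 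Hence there is an open cover $\{G_j\}$ of $G$ carrying holomorphic right inverses $R_j\colon G_j\to L(X)$.

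Next I would pass to a global smooth right inverse and measure its failure to be holomorphic. Choose a smooth partition of unity $\{\chi_j\}$ subordinate to $\{G_j\}$ and set $\tilde R=\sum_j\chi_j R_j$. Then $\tilde R$ is smooth on $G$ and $H\tilde R=\sum_j\chi_j(HR_j)=\sum_j\chi_j I=I$, so $\tilde R$ is a (merely smooth) global right inverse. Its obstruction to holomorphy is the smooth $L(X)$-valued $(0,1)$-form $\Phi=\overline\partial\tilde R=\sum_j(\overline\partial\chi_j)R_j$, which satisfies $H\Phi=\big(\overline\partial\sum_j\chi_j\big)I=0$; equivalently, every value of $\Phi$ has range in $\ker H(\lambda)$. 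If one can find a smooth $U\colon G\to L(X)$ with $HU=0$ and $\overline\partial U=\Phi$, then $R:=\tilde R-U$ is holomorphic (since $\overline\partial R=0$) and $HR=I$, giving the desired right inverse. Thus everything reduces to solving $\overline\partial U=\Phi$ with $U$ valued in operators mapping into the kernel bundle $\mathcal N$ with fibres $\ker H(\lambda)$, which is a holomorphic Banach subbundle of $G\times X$ because on each $G_j$ the operators $I-R_jH$ form a holomorphic family of projections onto $\ker H(\lambda)$.

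The main obstacle is this constrained $\overline\partial$-equation. It cannot be solved simply by solving $\overline\partial U_0=\Phi$ freely in $L(X)$ and projecting with the smooth projection $I-\tilde RH$ onto $\mathcal N$, because $\overline\partial$ does not commute with a merely smooth projection and one picks up the spurious term $(\overline\partial(I-\tilde RH))U_0$. The genuine obstruction is the Dolbeault class of $\Phi$ in $H^{0,1}_{\overline\partial}(G,\mathrm{Hom}(X,\mathcal N))\cong H^1(G,\mathcal O(\mathrm{Hom}(X,\mathcal N)))$, and here the hypotheses enter decisively: $G$ is a domain in $\mathbb C$, hence a one-dimensional Stein manifold, so the vanishing of this $H^1$ for the holomorphic Banach bundle $\mathrm{Hom}(X,\mathcal N)$ follows from the infinite-dimensional Oka--Grauert principle of Bungart (the Banach-bundle analogue of Cartan's Theorem~B and of Grauert's theorem). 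This cohomological vanishing, which converts the easy local right inverses into a single global holomorphic one, is the heart of the matter and is exactly the step carried out by Allan and, in the present formulation, by Ivanov.
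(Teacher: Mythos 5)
The paper contains no proof of this proposition at all: it is quoted as \cite[Corollary~3.10]{Iva}, which in turn rests on Allan's theorem \cite{All}, so your attempt can only be compared with that literature rather than with an argument in the paper itself. Measured against it, your reduction is correct and follows essentially the same route. The local step (taking $R_{\lambda_0}(\lambda)=R_0(H(\lambda)R_0)^{-1}$ on a disk where $H(\lambda)R_0$ is invertible) is the standard one; the partition-of-unity construction of the smooth right inverse $\tilde R$ and the computation $H\Phi=0$ for $\Phi=\overline\partial\tilde R$ are correct; your observation that the naive projection $I-\tilde RH$ does not commute with $\overline\partial$ correctly identifies why the problem is not trivially solvable; and the operators $I-R_jH$ are indeed holomorphic projections onto $\ker H(\lambda)$ (idempotency and range both follow from $HR_j=I$), so the kernel bundle $\mathcal N$ is a holomorphic Banach subbundle and the obstruction lives in $H^1\bigl(G,\mathcal O(\mathrm{Hom}(X,\mathcal N))\bigr)$. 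The one thing to be clear-eyed about is that the final vanishing step is not proved but delegated to Bungart's Theorem-B-type result for holomorphic Banach vector bundles over Stein bases; that citation is legitimate and is exactly the mechanism behind the Allan--Ivanov proofs, but it means your argument is a correct reduction of the proposition to a deep known theorem rather than a self-contained proof --- which is also precisely the status the statement has in the paper.
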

\vskip10pt

\section{Proof of the main result}\label{sec:proof}

Now we are in a position to give a proof of Theorem~\ref{thm:specshad}.
\begin{proof}[Proof of Theorem~\ref{thm:specshad}]
\emph{Necessity.\,} Suppose, by the way of contradiction, that~$T$ has the shadowing property and there exists $\lambda\in\sigma_r(T)\cap\mathbb T$. Without loss of generality, we may assume that $\lambda=1$. Otherwise, we pass from $T$ to $\lambda^{-1}T$ which, according to Proposition~\ref{prop:unitshad}, also has the shadowing property and $1\in\sigma_r(\lambda^{-1}T)$. Therefore, from now on, we will assume that $\lambda=1$. Since $1\in\sigma_r(T)$, by the application of Proposition~\ref{prop:rightsprop}, we conclude that $1\in\sigma_a(T^*)$, i.e.,
\begin{equation}\label{eq:notbb}
I-T^*\quad \text{is not bounded below}.
\end{equation}
 By Proposition~\ref{prop:shadcrit1}, the shadowing property of~$T$ implies that operator~$\mathcal B$ given by~\eqref{eq:bopdef} is bounded below. Therefore, there exists $\alpha>0$ such that
 \begin{equation}\label{eq:bbineq}
 \|\mathcal B(y)\|_1\geq\alpha\|y\|_1\qquad\text{for all $y=(y_n)_{n\in\mathbb Z}\in l_1(X)$}.
\end{equation}
Let $x\in X$ and $q\in(1,\infty)$. Define a sequence $y=(y_n)_{n\in\mathbb Z}$ in~$X$ by
\begin{equation}\label{eq:ytest}
y_n=
\begin{cases} q^n x\qquad&\text{for $n<0$}\\
 q^{-n}x\qquad&\text{for $n\geq0$}.\\
\end{cases}	
\end{equation}
Since $q>1$, we have that
\begin{equation*}
\|y\|_1=\sum_{n=-\infty}^\infty\|y_n\|=\|x\|
\biggl(\,\sum_{n=1}^\infty q^{-n}+\sum_{n=0}^\infty q^{-n}\biggr)=\frac{1+q}{q-1}\|x\|<\infty.
\end{equation*}
From~\eqref{eq:bopdef} and~\eqref{eq:ytest}, we find that
\begin{align*}
\|\mathcal B(y)\|_1&=\sum_{n=-\infty}^\infty\|y_{n-1}-T^* y_n\|\\
&=\sum_{n=-\infty}^{-1}\|q^{n-1}x-q^{n}T^*x\|+\|q^{-1}x-T^*x\|
+\sum_{n=1}^{\infty}\|q^{-(n-1)}x-q^{-n}T^*x\|\\
&=\|q^{-1}x-T^*x\|\sum_{n=-\infty}^{-1}q^n+\|q^{-1}x-T^*x\|
+\|qx-T^*x\|\sum_{n=1}^{\infty}q^{-n}\\
&=\|q^{-1}x-T^*x\|\biggl(\,\sum_{n=1}^{\infty}q^{-n}+1\biggr)
+\|qx-T^*x\|\sum_{n=1}^{\infty}q^{-n}\\
&=\|q^{-1}x-T^*x\|\frac{q}{q-1}+\|qx-T^*x\|\frac{1}{q-1}.
\end{align*}
Substituting the $l_1$-sequence~$y=(y_n)_{n\in\mathbb Z}$ defined by~\eqref{eq:ytest} into~\eqref{eq:bbineq}, we obtain that for all $x\in X$ and $q>1$,
\begin{equation*}
\|q^{-1}x-T^*x\|q+\|qx-T^*x\|\geq\alpha(1+q)\|x\|.
\end{equation*}
From this, letting $q\to1$, we conclude that
\begin{equation*}
\|(I-T^*)x\|\geq\alpha\|x\|\qquad\text{for all $x\in X$}.
\end{equation*}
Thus, $I-T^*$ is bounded below which contradicts~\eqref{eq:notbb}.

\emph{Sufficiency.\,} Suppose that~\eqref{eq:specshad} is satisfied, i.e.,  $\lambda I-T$ is right invertible for every  $\lambda\in\mathbb C$ with $|\lambda|=1$. Since the right resolvent set of~$T$, the set of those $\lambda\in\mathbb C$ for which $\lambda I-T$ is right invertible, is an open subset of~$\mathbb C$ (see, e.g.,  \cite[Chap.~VII, Par.~2, Theorem~2.2, p.~192]{Con}), this implies that $\lambda I-T$ is right invertible in a neighborhood of the unit circle $|\lambda|=1$. Thus, there exists $\epsilon>0$ such that the holomorphic operator-valued function $H(\lambda)=\lambda I-T$ is right invertible for every $\lambda$ from the open and connected set $A_\epsilon\subset\mathbb C$ given by
\begin{equation*}
A_\epsilon=\{\,\lambda\in\mathbb C: 1-\epsilon<|\lambda|<1+\epsilon\,\}.
\end{equation*}
 By the application of Proposition~\ref{prop:holright} with $G=A_\epsilon$, we conclude that there exists a holomorphic function $R\colon A_\epsilon\rightarrow L(X)$ such that
 \begin{equation}\label{eq:rightinv}
 (\lambda I-T)R(\lambda)=I\qquad\text{for all $\lambda\in A_\epsilon$}.  
\end{equation}
The set $A_\epsilon$ is an annulus of the form~\eqref{eq:annul} with $\lambda_0=0$, $r_1=1-\epsilon$ and $r_2=1+\epsilon$. As noted in Sec.~\ref{sec:prelim},  the holomorphic function~$R\colon A_\epsilon\rightarrow L(X)$ can be expanded into a Laurent series
\begin{equation}\label{eq:rightlaur}
R(\lambda)=\sum_{n=-\infty}^\infty C_n\lambda^n,\qquad 1-\epsilon<|\lambda|<1+\epsilon,
\end{equation}
with unique coefficients $C_n\in L(X)$, $n\in\mathbb Z$. Substituting~\eqref{eq:rightlaur} into~\eqref{eq:rightinv}, we find that
\begin{equation*}
\sum_{n=-\infty}^\infty(C_{n-1}-TC_n)\lambda^n=I\qquad\text{whenever $1-\epsilon<|\lambda|<1+\epsilon$}.
\end{equation*}
From this, in view of the uniqueness of the coefficients of the Laurent series, we have that
\begin{equation*}
C_{-1}-TC_0=I
\end{equation*}
and 
\begin{equation*}
C_{n-1}-TC_n=0\qquad\text{for $n\neq0$}.
\end{equation*}
Hence,
\begin{gather}
	C_0=-T^{-1}(I-C_{-1}),\label{eq:crel1}\\
		C_n=T^{-n}C_0=-T^{-n-1}(I-C_{-1}),\qquad n=1,2,\dots\label{eq:crel2}
\end{gather}
and
\begin{equation}\label{eq:crel3}
C_{-n}=T^{n-1} C_{-1},\qquad n=1,2,\dots
\end{equation}
As noted in Sec.~\ref{sec:prelim} (see~\eqref{eq:limsup1} and~\eqref{eq:limsup2}), the convergence of the Laurent series~\eqref{eq:rightlaur} in the annulus~$A_\epsilon$ implies that
\begin{equation}\label{eq:mlimsup}
\limsup_{n\to\infty}\root{n}\of{\|C_n\|}\leq\frac{1}{1+\epsilon}<1
\end{equation}
and
\begin{equation}\label{eq:plimsup}
\limsup_{n\to\infty}\root{n}\of{\|C_{-n}\|}\leq 1-\epsilon<1.
\end{equation}
By virtue of~\eqref{eq:crel2} and~\eqref{eq:crel3}, we have that
\begin{equation*}
\root{n}\of{\|T^{-n}(I-C_{-1})\|}=
\root{n}\of{\|TC_n\|}
\leq\root{n}\of{\|T\|}\|\root{n}\of{\|C_n\|}
\end{equation*}
and
\begin{equation*}
\root{n}\of{\|T^{n}C_{-1}\|}=
\root{n}\of{\|TC_{-n}\|}\leq\root{n}\of{\|T\|}\root{n}\of{\|C_{-n}\|}
\end{equation*}
for $n=1,2,\dots$. Letting $n\to\infty$ in the last two inequalities and using~\eqref{eq:mlimsup} and~\eqref{eq:plimsup}, respectively, we obtain
\begin{equation*}
\limsup_{n\to\infty}\root{n}\of{\|T^{-n}(I-C_{-1})\|}\leq\frac{1}{1+\epsilon}<1
\end{equation*}
and
\begin{equation*}
\limsup_{n\to\infty}\root{n}\of{\|T^{n}C_{-1}\|}\leq 1-\epsilon<1.
\end{equation*}
Therefore, assumptions~\eqref{eq:contr1} and~\eqref{eq:contr2} of Proposition~\ref{prop:shadcrit2} are satisfied with the bounded linear operator~$B=C_{-1}$. By the application of Proposition~~\ref{prop:shadcrit2}, we conclude that~$T$ has the shadowing property.

\end{proof}

\section*{Acknowledgements}
This work was supported in part by the Hungarian National Research, Development and Innovation Office grant no.~K139346.
The author would like to express his sincere thanks to the reviewers for their valuable remarks and suggestions, which have improved the presentation of the results.



\end{document}